\newtheorem{lemma}{Lemma}[section]
\newtheorem{theorem}{Theorem}[section]
\theoremstyle{remark}
\newtheorem{remark}{Remark}[section]
\DeclareMathOperator{\grad}{grad}
\DeclareMathOperator{\Hess}{Hess}
\numberwithin{equation}{section}
\begin{document}
\title{Plancherel-Rotach formulae for average characteristic polynomials of products of Ginibre random matrices and the Fuss-Catalan distribution}
\author{Thorsten Neuschel%
  \thanks{Department of Mathematics, KU Leuven, Celestijnenlaan 200B box 2400, BE-3001 Leuven, Belgium. This work is supported by KU Leuven research grant OT\slash12\slash073. E-mail: Thorsten.Neuschel@wis.kuleuven.be}}

 \date{\today}

\maketitle

\paragraph{Abstract} Formulae of Plancherel-Rotach type are established for the average characteristic polynomials of certain Hermitian products of rectangular Ginibre random matrices on the region of zeros. These polynomials form a general class of multiple orthogonal hypergeometric polynomials generalizing the classical Laguerre polynomials. The proofs are based on a multivariate version of
the complex method of saddle points. After suitable rescaling the asymptotic zero distributions for the polynomials are studied and shown to coincide with the Fuss-Catalan distributions. Moreover, introducing appropriate coordinates, elementary and explicit characterizations are derived for the densities as well as for the distribution functions of the Fuss-Catalan distributions of general order.

\paragraph{Keywords} Asymptotics; asymptotic distribution of zeros; Plancherel-Rotach formula; multivariate saddle point method; Fuss-Catalan distribution; Ginibre random matrices; average characteristic polynomials; generalized hypergeometric polynomials

\paragraph{Mathematics Subject Classification (2010)}   30E15 ; 41A60 , 41A63 
\section{Introduction}

A recently developed field of research in the rich theory of random matrices is the study of eigenvalue distributions for products of random matrices with a fixed number of factors (see, e.g., \cite{Akemann1}, \cite{Akemann2}, \cite{Burda1}, \cite{Burda2}, \cite{Rourke}, \cite{KuijlaarsZhang}). 
Let \(r\in\mathbb{N}=\{1, 2, 3, \ldots\}\) be an arbitrary positive integer and denote by \(X_1, X_2, \ldots, X_r\) complex random matrices such that all entries are independent random variables with a complex Gaussian distribution (matrices of this kind are called Ginibre random matrices). Moreover, let each matrix \(X_j\) be of dimension \(N_j \times N_{j-1}\) and let the matrix \(Y_r\) be defined as the product
\[Y_r=X_r X_{r-1} \cdots X_1.\]
Assuming \(N_0 = \min\{N_0, \ldots, N_r\}\) and writing \(n=N_0\), let us consider the \(n \times n\)-dimensional matrix \(Y_r^{\ast} Y_r\), where \(Y_r^{\ast}\) denotes the conjugate transpose of \(Y_r\). Akemann, Ipsen and Kieburg showed in \cite{Akemann2} that the eigenvalues of \(Y_r^{\ast} Y_r\) are distributed according to a determinantal point process with a correlation kernel expressible in terms of Meijer G-functions. Kuijlaars and Zhang recently showed in \cite{KuijlaarsZhang} that this point process can be interpreted as a multiple orthogonal polynomial ensemble. In this paper we are interested in the average characteristic polynomials of the matrices \(Y_r^{\ast} Y_r\) which are given as generalized hypergeometric polynomials of the form
\begin{equation}\label{DefP}P_n (x) = (-1)^n \prod_{l=1}^{r} (\nu_l +1)_n  ~ _{1} F_{r} \left(\begin{matrix} 
 & -n& \\ \nu_1 +1, & \ldots, &\nu_r +1 & \end{matrix}\,\bigg\vert\,  x \right),
\end{equation}
where \(\nu_j = N_j -N_0\) for \(j\in\{1,\ldots,r\}\) (see \cite{KuijlaarsZhang}, \cite{Akemann2}). These polynomials can be considered as a generalization of the classical Laguerre polynomials, as in the special case \(r=1\) and \(\nu_1 =0\) we have
\[P_n (x) = (-1)^n n! ~ _{1} F_{1} \left(\begin{matrix} 
  -n \\ 1 \end{matrix}\,\bigg\vert\,  x \right)=(-1)^n n! L_{n}^{(0)}(x).\]
The classical Laguerre polynomials satisfy an orthogonality relation on the real line. In the case of general parameters the polynomials \(P_n\) turn out to be multiple orthogonal polynomials of type II with respect to \(r\) weight functions (see \cite{KuijlaarsZhang}).
The origin of those polynomials lies in the theory of random matrices and as there typically exists a close connection between the behavior of the eigenvalues and the roots of the average characteristic polynomials, we are interested in the behavior of the sequence of polynomials \((P_n)_n\) on the asymptotic region of zeros after introducing a suitable rescaling of the argument. More precisely, our aim is to study the asymptotic behavior of the sequence \(P_n (n^r x)\) on the interval \(\left(0,\frac{(r+1)^{r+1}}{r^r}\right)\). From the definition (\ref{DefP}) we immediately obtain the sum representation
\[P_n (x)=(-1)^n \prod_{j=1}^r (n+\nu_j)!~ F_n (x),\]
where we introduce the polynomials
\begin{equation}\label{DefF}F_n (x)= \sum_{k=0}^n \binom{n}{k}\frac{(-x)^k}{(k+\nu_1)! \ldots(k+\nu_r)!}.
\end{equation}
The main result in Theorem \ref{PRA} is an oscillatory Plancherel-Rotach type asymptotic formula for the polynomials \(F_n\) of the following form: Let \(r\in\mathbb{N}\) and \(\nu_1, \ldots, \nu_r \in \mathbb{N}_0\) be arbitrary integers. Then for the polynomials \(F_n\) defined in (\ref{DefF}) we have 
\begin{align} \nonumber F_n (n^r x)=&\frac{2}{(2\pi)^{r/2}} \left(\frac{\sin{r\varphi}}{n \sin{(r+1)\varphi}}\right)^{\frac{r}{2}+\nu_1+\ldots+\nu_r} \exp\left\{nr\frac{\sin{(r+1)\varphi}}{\sin{r\varphi}} \cos{\varphi}\right\}\\ \label{PR1}
\times&\left(-\frac{\sin{r\varphi}}{\sin{\varphi}}\right)^n \left\{\left(1-\frac{r\sin{\varphi}\cos{(r+1)\varphi}}{\sin{r\varphi}}\right)^2+\left(\frac{r\sin{\varphi}\sin{(r+1)\varphi}}{\sin{r\varphi}}\right)^2\right\}^{-1/4}\\\nonumber
\times&\left\{\cos{\left(n\left(r\frac{\sin{(r+1)\varphi}}{\sin{r\varphi}}\sin{\varphi}-(r+1)\varphi\right)+g(r,\nu,\varphi)\right)}+o(1)\right\},
\end{align}
as \(n\rightarrow \infty\). In (\ref{PR1}) we use
\[x=\frac{\left(\sin{(r+1)\varphi}\right)^{r+1}}{\sin{\varphi}\left(\sin{r\varphi}\right)^r},\quad 0<\varphi<\frac{\pi}{r+1},\] 
parameterizing the interval \(\left(0,\frac{(r+1)^{r+1}}{r^r}\right)\), and the phase shift \(g(r,\nu,\varphi)\) is given by
\begin{equation*}g(r,\nu,\varphi)=-\left(\frac{r}{2}+\nu_1+\ldots+\nu_r\right)\varphi-\frac{1}{2}\arctan{\left(\frac{-r\sin{\varphi}\sin{(r+1)\varphi}}{\sin{r\varphi}-r\sin{\varphi \cos{(r+1)\varphi}}}\right)}.
\end{equation*}
The proof of the Plancherel-Rotach type formula (\ref{PR1}) is based on an application of a multivariate version of the method of saddle points as shown in \cite{Neuschel} (see also \cite{Gamkrelidze}, p. 124, and \cite{Fedoryuk}), which we recall in Theorem \ref{MSP} for convenience. As a consequence of Theorem \ref{PRA}, we study the behavior of the zeros in Theorem \ref{WA}. We show that the asymptotic zero distribution of the rescaled polynomials \(F_n(n^r x)\) is given by the Fuss-Catalan distribution of order \(r\) (see, e.g., \cite{Penson}). Furthermore, it turns out that this distribution can be characterized by the distribution function 
\[V(x) = \begin{cases} 0 &\mbox{if } x \leq 0 \\
1-\frac{1}{\pi}\left(f\circ \rho^{-1}\right)(x) & \mbox{if } 0<x<\frac{(r+1)^{r+1}}{r^r}\\
1 &\mbox{if } x\geq \frac{(r+1)^{r+1}}{r^r},\end{cases} 
\]
where we explicitly have
\[\rho:\left(0, \frac{\pi}{r+1}\right)\rightarrow \left(0, \frac{(r+1)^{r+1}}{r^r}\right), \quad \rho(\varphi)=\frac{\left(\sin{(r+1)\varphi}\right)^{r+1}}{\sin{\varphi}\left(\sin{r\varphi}\right)^r},
\]
\[f:\left(0, \frac{\pi}{r+1}\right)\rightarrow \left(0, \pi\right), \quad f(\varphi)=(r+1)\varphi-r\frac{\sin{(r+1)\varphi}}{\sin{r\varphi}}\sin{\varphi}.
\]
Moreover, in Theorem \ref{C} we state an elementary and explicit description for the density of the Fuss-Catalan distribution of order \(r\) in the coordinates \(x=\rho(\varphi)\) by
\[v(\rho(\varphi))=\frac{(\sin\varphi)^2 (\sin r\varphi)^{r-1}}{\pi (\sin(r+1)\varphi)^{r}}.\]

\section{Auxiliary Results}

The first auxiliary result we mention is a simple version of the multivariate method of saddle points (see \cite{Neuschel} for a short proof and discussion).
\begin{theorem}\label{MSP}
Let \(p\) and \(q\) be holomorphic functions on a complex domain \(D\subset \mathbb{C}^r\) with \([-a,a]^r \subset D\) for a number \(a>0\), and let
\[I(n)=\int\limits_{[-a,a]^r} e^{-n p(t)} q(t) dt,\]
where \(t=(t_1,\ldots,t_r)\). Moreover, let \(t=0\) be a simple saddle point of the function \(p\), which means that we have for the complex gradient
\[\grad p (0) = 0\]
and for the Hessian
\[\det\Hess p (0) \neq 0.\]
Furthermore, suppose that, considered as a real-valued function on \([-a,a]^r\), \(\Re [p (w)]\) attains its minimum exactly at the point \(t=0\) with \(\det\Re\Hess p (0) \neq 0\) and \(q(0)\neq 0\). Then we have
\begin{equation}\label{A}I(n) = \left(\frac{2\pi}{n}\right)^{r/2} e^{-n p(0)} \frac{q(0)}{\sqrt{\det\Hess p (0)}} (1+o(1)),
\end{equation}
as \(n\rightarrow \infty\).
\end{theorem}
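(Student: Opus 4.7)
The plan is to adapt the classical one-dimensional Laplace / saddle-point argument to the multivariate holomorphic setting, in three steps: localize the integral around $t=0$, Taylor-expand $p$ to quadratic order (and $q$ to constant order), and evaluate the resulting Gaussian integral in closed form.

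\textbf{Localization.} Since $\Re[p]$ is continuous on the compact set $[-a,a]^r$ and attains its minimum uniquely at $t=0$, and since the hypothesis $\det \Re\Hess p(0)\neq 0$ combined with minimality forces $\Re\Hess p(0)$ to be positive definite, I would pick $\delta\in(0,a)$ and $c>0$ with $\Re[p(t)-p(0)]\geq c$ on $[-a,a]^r\setminus[-\delta,\delta]^r$. The tail contribution is then $O(e^{-n(\Re p(0)+c)})$, which is exponentially smaller than the right-hand side of (\ref{A}), so only the integral over $[-\delta,\delta]^r$ matters.

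\textbf{Quadratic approximation and rescaling.} On $[-\delta,\delta]^r$ holomorphy gives $p(t)=p(0)+\tfrac12 t^{T}Ht+\rho(t)$ with $H=\Hess p(0)$ and $\rho(t)=O(|t|^{3})$, together with $q(t)=q(0)+O(|t|)$. The substitution $t=s/\sqrt n$ turns the localized integral into
\[n^{-r/2}e^{-np(0)}\!\!\int\limits_{[-\delta\sqrt n,\delta\sqrt n]^{r}}\!\!e^{-\tfrac12 s^{T}Hs}\,e^{-n\rho(s/\sqrt n)}\,q(s/\sqrt n)\,ds.\]
Splitting the $s$-region into $|s|\leq n^{1/8}$ and its complement, on the former one has $n\rho(s/\sqrt n)=O(n^{-1/8})=o(1)$ and $q(s/\sqrt n)=q(0)(1+o(1))$ uniformly, while on the latter the positive definiteness of $\Re H$ gives the pointwise bound $|e^{-\tfrac12 s^{T}Hs}|\leq e^{-c'|s|^{2}}$ for some $c'>0$, producing a super-polynomially small contribution.

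\textbf{Evaluation of the Gaussian integral and closing the argument.} What remains is to identify $\int_{\mathbb{R}^{r}}e^{-\tfrac12 s^{T}Hs}\,ds=(2\pi)^{r/2}/\sqrt{\det H}$ for the complex symmetric matrix $H$ with positive definite real part. I would do this by analytic continuation along the family $H_{\lambda}=\lambda H+(1-\lambda)\Re H$, $\lambda\in[0,1]$: since each $\Re H_{\lambda}=\Re H$ remains positive definite, the integral defines a holomorphic function of $\lambda$ in a neighbourhood of $[0,1]$ that matches the classical real Gaussian at $\lambda=0$, thereby fixing the branch of $\sqrt{\det H}$ unambiguously at $\lambda=1$. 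Assembling the three steps yields (\ref{A}). The main obstacle I anticipate is precisely this branch tracking for $\sqrt{\det H}$, together with the bookkeeping required to certify that the cubic remainder $\rho$ and the Gaussian tail both sit inside the $1+o(1)$ tolerance uniformly in $s$; everything else is routine once the localization window $\delta$ has been fixed.
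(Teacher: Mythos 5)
The paper does not actually prove Theorem~\ref{MSP}; it states the result and refers the reader to \cite{Neuschel} for the proof. So there is no internal proof to compare against, and I can only judge your argument on its own merits. Your three-step Laplace scheme (localize, Taylor/rescale, identify the Gaussian) is exactly the standard route, and the analytic-continuation device $H_\lambda=\lambda H+(1-\lambda)\Re H$ for pinning down the branch of $\sqrt{\det H}$ is correct: $\Re H_\lambda=\Re H$ stays positive definite for $\lambda\in[0,1]$ and for complex $\lambda$ near that segment, the integral is holomorphic in $\lambda$, and at $\lambda=0$ it matches the real positive-definite Gaussian, so the branch propagates unambiguously to $\lambda=1$.

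There is, however, one genuine gap in the rescaling step. On the annular region $n^{1/8}<|s|\le\delta\sqrt n$ you bound only the factor $\bigl|e^{-\frac12 s^{T}Hs}\bigr|\le e^{-c'|s|^{2}}$ and declare the contribution super-polynomially small, but the integrand there still carries the factor $e^{-n\rho(s/\sqrt n)}$, whose modulus is $e^{-n\Re\rho(s/\sqrt n)}$ and can be exponentially \emph{large}: near $|s|\asymp\delta\sqrt n$ one has $n\rho(s/\sqrt n)=O(n\delta^{3})$, which is unbounded in $n$. The fix is routine but must be stated: shrink $\delta$ so that $|\Re\rho(t)|\le\tfrac14\,t^{T}(\Re H)t$ on $[-\delta,\delta]^{r}$ (possible since $\rho(t)=O(|t|^{3})$ uniformly there), whence $\bigl|e^{-\frac12 s^{T}Hs}\,e^{-n\rho(s/\sqrt n)}\bigr|\le e^{-\frac14 s^{T}(\Re H)s}$ for all $|s|\le\delta\sqrt n$, and the tail over $|s|>n^{1/8}$ is then genuinely $O(e^{-cn^{1/4}})$. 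Equivalently, one can perform this Gaussian-domination estimate before rescaling, on the localization window $[-\delta,\delta]^{r}$. You flagged the ``bookkeeping'' around $\rho$ as the anticipated obstacle, which is the right instinct, but the written argument as it stands does not yet close it. Once that estimate is inserted, the rest of your proof is complete and correct.
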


\begin{remark} The branch of the square root in (\ref{A}) is determined by the identity
\[\int\limits_{\mathbb{R}^r} e^{-\frac{1}{2} t^T\Hess p (0)t}dt=\frac{(2\pi)^{r/2}}{\sqrt{\det\Hess p (0)}}.\]
In general, the proper choice of the branch for the square root in (\ref{A}) can be described by fixing the arguments of the eigenvalues of \(\Hess p (0)\) in a correct manner (see, e.g., \cite{Fedoryuk}), which is related to the Maslov index.
\end{remark}

Next we prove a preliminary result on the location of the zeros of the polynomials in question.
\begin{lemma}\label{Zeros} Let \(r\in\mathbb{N}\) and \(\nu_1, \ldots, \nu_r \in \mathbb{N}_0\) be arbitrary integers, then all zeros of the polynomials \(F_n\) defined in (\ref{DefF}) are real and positive.
\end{lemma}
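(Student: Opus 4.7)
The positivity part of the claim is immediate: for $x \le 0$ every summand in~(\ref{DefF}) is non-negative and the constant term $1/(\nu_1!\cdots\nu_r!)$ is strictly positive, so $F_n(x)>0$ on $(-\infty,0]$ and hence every real zero of $F_n$ must be positive. The substantive task is to show that all $n$ zeros are real.

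My plan is to exploit the multiple orthogonality structure of \cite{KuijlaarsZhang}: $P_n$, and therefore $F_n$ up to a positive constant, is a type II multiple orthogonal polynomial of total degree $n$ with respect to a system of $r$ positive weights $w_1,\ldots,w_r$ on $(0,\infty)$, expressible through Meijer $G$-functions (equivalently, as densities of products of $r$ independent Gamma random variables). Given this, the conclusion is standard: provided the system $(w_1,\ldots,w_r)$ is algebraic Chebyshev (AT) on $(0,\infty)$, the classical sign-change argument for type II MOPs forces $F_n$ to have $n$ simple zeros in $(0,\infty)$. Indeed, were there strictly fewer than $n$ sign changes in $(0,\infty)$, one could construct a nontrivial combination $\sum_j q_j(x)w_j(x)$, with $\deg q_j\le n_j-1$ and $\sum_j n_j=n$, whose product with $F_n$ has constant sign on $(0,\infty)$, contradicting the multiple orthogonality.

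The technical core is therefore verification of the AT property for this specific weight system, and this is the step I expect to be the main obstacle. I would handle it via the Mellin--Barnes representation of the $w_j$: any combination $\sum_j p_j(x)w_j(x)$ admits a single Mellin--Barnes expression whose number of zeros on $(0,\infty)$ is controlled by the pattern of poles of the $\Gamma$-factors, yielding the required bound of $|\vec n|-1$. Alternatively, the AT property for precisely the Meijer $G$-weights arising from products of Ginibre matrices is either contained in, or a short consequence of, the analysis in~\cite{KuijlaarsZhang} and the references therein, in which case it can simply be quoted; everything else is then routine.

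As a purely elementary alternative one can try induction on $r$. The base case $r=1$ reduces to the classical Laguerre polynomial $F_n = \frac{n!}{(n+\nu_1)!}L_n^{(\nu_1)}$, whose zeros are known to be real and positive. The Beta integral $1/(k+\nu_r)! = \int_0^1 s^k(1-s)^{\nu_r-1}\,ds/((\nu_r-1)!\,k!)$ would provide a positive integral transform linking the $r$-case to the $(r-1)$-case, but an extra factor of $k!$ enters, so the family is not strictly preserved. Transferring real-rootedness across this transform would then require classical Hadamard-type results (Malo--Schur, Laguerre's coefficient-wise product theorem), making this alternative route less clean than the MOP approach above.
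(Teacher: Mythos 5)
Your positivity argument is fine, but the real-rootedness part of your main route has a genuine gap. Everything hinges on the AT property of the Meijer $G$-weight system on $(0,\infty)$: without it, type II multiple orthogonality with a (near-diagonal) multi-index $(n_1,\ldots,n_r)$, $\sum_j n_j=n$, only forces at least $\max_j n_j\approx n/r$ sign changes of $F_n$ on $(0,\infty)$ (use each weight separately), not $n$. You correctly identify this as the technical core, but you do not prove it: the Mellin--Barnes ``pole pattern'' sketch is not an argument --- showing that every nontrivial combination $\sum_j q_j(x)w_j(x)$ with $\deg q_j\le n_j-1$ has at most $n-1$ zeros on $(0,\infty)$ is itself a substantial claim for these Meijer $G$-functions --- and the fallback assertion that the AT property is ``contained in, or a short consequence of'' \cite{KuijlaarsZhang} is not substantiated; what that reference establishes is the multiple orthogonality relations themselves, and basing the lemma on an AT property would require you to actually supply a proof of it (or a precise citation), which the proposal does not. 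So, as written, the decisive step is missing.

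It is also worth pointing out that the ``elementary alternative'' you set aside is essentially the paper's proof, and it needs no Beta-integral detour (which, as you note, both distorts the family by a factor $k!$ and, in any case, could not transfer real-rootedness through an average). The paper simply invokes the classical result of \cite{Polya}, Part 5, Chapter 1, Ex.~63 (with $P(x)=x^{n+m}$): if $\sum_k a_k x^k$ has only real zeros, then so does $\sum_k \frac{a_k}{(k+m)!}x^k$. Starting from $(1-x)^n=\sum_k\binom{n}{k}(-x)^k$ and applying this coefficientwise transformation $r$ times, once for each $\nu_j$, gives real-rootedness of $F_n$ directly; positivity of the zeros then follows exactly as in your first paragraph. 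So the multiplier-type tool you mentioned and discarded is precisely the clean and complete route, whereas the MOP/AT route, while plausible in principle, is left unproven at its crucial point.
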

\begin{proof}Let \(\sum_{k=0}^n a_k x^k\) be any real polynomial such that all its roots are real. Then, of course, the same is true for the zeros of the polynomial \(\sum_{k=0}^n a_{n-k} x^k\). Moreover, let \(m\) be a nonnegative integer, then we conclude from \cite{Polya}, Part 5, Chapter 1, Ex. 63 (with \(P(x)=x^{n+m}\)) that all roots of the polynomial \(\sum_{k=0}^n \frac{a_k}{(k+m)!} x^k\) are real. Now, starting with the observation that all zeros of the polynomials \(\sum_{k=0}^n \binom{n}{k} (-x)^k\) are real, by mathematical induction with respect to \(r\) it can easily be established that all zeros of the polynomial \(F_n\) are real, thus they have to be positive.
\end{proof}
Finally, we provide an inequality for a function of several real variables which will be an important tool for the proof of the main result in Theorem \ref{PRA}.
\begin{lemma}\label{Inequality} Let \(r\in\mathbb{N}\), \(0<\varphi<\frac{\pi}{r+1}\), and let the function \(h:[-\pi,\pi]^r \rightarrow \mathbb{R}\) be defined by (\(\boldsymbol{x}=(x_1,\ldots,x_r)\))
\[h(\boldsymbol{x})=\exp{\left(2\frac{\sin{(r+1)\varphi}}{\sin{r\varphi}}\sum_{j=1}^{r}\cos{x_j}\right)}\left(\sin^2{\varphi}+\sin^2{(r+1)\varphi}-2\sin{\varphi}\sin{(r+1)\varphi}\cos{\Bigg(\sum_{j=1}^{r}x_j\Bigg)}\right).\]
Then the function \(h\) attains its global maximum exactly at the points
\[(x_1,\ldots,x_r)=(\varphi,\ldots,\varphi)\quad\text{and}\quad(x_1,\ldots,x_r)=(-\varphi,\ldots,-\varphi).\]
\end{lemma}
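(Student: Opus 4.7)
The plan is to locate all critical points of $\log h$ in the interior of $[-\pi,\pi]^r$, exploit an algebraic peculiarity of the gradient equations to reduce the global comparison to a one-variable problem, and handle the boundary separately.

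First I would compute
\[
\partial_k \log h(\boldsymbol{x})=-\frac{2\sin(r+1)\varphi}{\sin r\varphi}\sin x_k+\frac{2\sin\varphi\sin(r+1)\varphi\sin\bigl(\sum_j x_j\bigr)}{\sin^2\varphi+\sin^2(r+1)\varphi-2\sin\varphi\sin(r+1)\varphi\cos\bigl(\sum_j x_j\bigr)}
\]
for $k=1,\ldots,r$, and note the crucial fact that the second term is independent of $k$. Hence $\nabla\log h=0$ forces $\sin x_1=\cdots=\sin x_r$, so every interior critical point has all its components in a two-element set $\{\theta,\pi-\theta\}$ with $\theta\in[-\pi/2,\pi/2]$. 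By the symmetry $h(-\boldsymbol{x})=h(\boldsymbol{x})$, I restrict to $\theta\in[0,\pi/2]$.

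Writing $m\in\{0,\ldots,r\}$ for the number of components equal to $\pi-\theta$, we have
\[
\sum_j\cos x_j=(r-2m)\cos\theta,\qquad \sum_j x_j=m\pi+(r-2m)\theta,\qquad \cos\bigl(\sum_j x_j\bigr)=(-1)^m\cos\bigl((r-2m)\theta\bigr),
\]
so the value of $h$ at such a critical point becomes a function of the single variable $\theta$ (parametrised by $m$). The key trigonometric identity
\[
\sin^2\varphi+\sin^2(r+1)\varphi-2\sin\varphi\sin(r+1)\varphi\cos r\varphi=\sin^2 r\varphi,
\]
which follows from the product-to-sum formulas, then shows that in the case $m=0$ the point $\theta=\varphi$ satisfies the critical equation and yields $h(\varphi,\ldots,\varphi)=\exp\bigl(2r\cos\varphi\,\sin(r+1)\varphi/\sin r\varphi\bigr)\sin^2 r\varphi$. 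A sign analysis of the one-variable derivative on $[0,\pi/2]$ then confirms that $\theta=\varphi$ is the unique strict global maximum along the diagonal.

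For $m\ge 1$ the first factor $\exp\bigl(2(r-2m)\cos\theta\,\sin(r+1)\varphi/\sin r\varphi\bigr)$ is strictly smaller than its value at $(\varphi,\ldots,\varphi)$, since $r-2m\le r-2$ and $\cos\varphi>1-2/r$ holds throughout $\varphi\in(0,\pi/(r+1))$, giving $r\cos\varphi>r-2\ge(r-2m)\cos\theta$. The second factor is bounded above by $(\sin\varphi+\sin(r+1)\varphi)^2$, and the identity above provides an explicit control on the ratio $(\sin\varphi+\sin(r+1)\varphi)^2/\sin^2 r\varphi$; multiplying these bounds shows that $h$ at any critical point with $m\ge 1$ is strictly less than $h(\varphi,\ldots,\varphi)$. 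Finally, on the boundary of $[-\pi,\pi]^r$ at least one $\cos x_k$ equals $-1$, producing a factor $\exp\bigl(-4\sin(r+1)\varphi/\sin r\varphi\bigr)$ in the first exponential that the uniform upper bound on the second factor cannot compensate. I expect the principal obstacle to be the case $m\ge 1$, which must be carried out uniformly in $\theta\in[0,\pi/2]$, in $\varphi\in(0,\pi/(r+1))$, and with careful tracking of the parity-dependent sign $(-1)^m$; the boundary estimate is similar in flavour.
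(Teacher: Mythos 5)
Your reduction of the interior critical points is correct and is genuinely different from the paper's argument: since the second term of $\partial_k\log h$ is independent of $k$, all components of a critical point indeed share the same sine, and the identity $\sin^2\varphi+\sin^2(r+1)\varphi-2\sin\varphi\sin(r+1)\varphi\cos r\varphi=\sin^2 r\varphi$ correctly identifies the value at the claimed maximizers. However, the decisive comparisons are only asserted, and as stated they do not go through. For the case $m\ge 1$ (and likewise for the boundary), multiplying your two crude bounds amounts to claiming
\[
\left(\frac{\sin\varphi+\sin(r+1)\varphi}{\sin r\varphi}\right)^2<\exp\left(2\frac{\sin(r+1)\varphi}{\sin r\varphi}\bigl(r\cos\varphi-r+2\bigr)\right),
\]
and this inequality is asymptotically tight: as $\varphi\to\pi/(r+1)$ both sides tend to $1$, with first-order terms that differ only because $(r+1)\cos\frac{\pi}{r+1}>r-1$ (and for $r=1$ even the first-order terms coincide, while your intermediate step $(r-2m)\cos\theta\le r-2$ is false for $r=1$, $m=1$). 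So the conclusion does not follow from ``multiplying these bounds''; a uniform proof of this inequality on $(0,\pi/(r+1))$ is precisely the hard part of your route and is missing. A second gap is the $m=0$ case: for $r\ge 3$ the quantity $r\theta$ sweeps past $\pi$ as $\theta$ runs over $[0,\pi/2]$, the one-variable derivative changes sign repeatedly, and additional local maxima beyond $\theta=\pi/r$ must be excluded by a separate comparison (again delicate near $\varphi=\pi/(r+1)$); a plain ``sign analysis on $[0,\pi/2]$'' does not settle uniqueness of the maximum at $\theta=\varphi$.

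For comparison, the paper avoids these tight cross-comparisons altogether: it slices the cube by the hyperplanes $\sum_j x_j=c$, shows that on each slice the relevant quantity $\sum_j\cos x_j$ is maximized only at the diagonal point $(c/r,\ldots,c/r)$ with value $r\cos(c/r)$ (after reducing $|c|>\pi$ to $|c|\le\pi$ by periodicity), and thereby reduces the whole problem to the single one-variable function $\tilde h(c)$ on $[-\pi,\pi]$, whose derivative is shown to vanish only at $c=\pm r\varphi$ by comparing a strictly decreasing with a strictly increasing function. Your approach can likely be completed, but only after supplying the uniform inequality above and a genuine treatment of the diagonal for $\theta>\pi/r$, both of which are currently gaps rather than routine details.
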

\begin{proof}We consider the proof only for \(r\geq3\) as in the case \(r\in\{1,2\}\) the statement can be established with less effort. First, let \(c \in (-\pi,\pi)\) be a real number and let the auxiliary function \(f_c:[-\pi,\pi]^{r-1}\rightarrow \mathbb{R}\) be defined by 
\[f_c(x_1,\ldots,x_{r-1})=\sum_{j=1}^{r-1} \cos{x_j}+\cos{\left(c-\sum_{j=1}^{r-1} x_j\right)}.\]
Then it is an elementary task of multivariate calculus to show that \(f_c\) takes its global maximum exactly at the point
\((x_1,\ldots,x_{r-1})=(\frac{c}{r},\ldots,\frac{c}{r})\). Moreover, if \(c\in\{-\pi,\pi\}\), then \(f_c\) attains its global maximum at the points \((x_1,\ldots,x_{r-1})=(\frac{c}{r},\ldots,\frac{c}{r})\) and \((x_1,\ldots,x_{r-1})=(-\frac{c}{r},\ldots,-\frac{c}{r})\). By extending the domain of the function \(f_c\) to \(\mathbb{R}^{r-1}\) and observing that \(f_c\) is \(2\pi\)-periodic with respect to all its variables, we can state more generally the following: If we have \(c\in\mathbb{R}\backslash\{(2l+1)\pi\vert l\in \mathbb{Z}\}\), then \(f_c\) attains its global maximum exactly in the points
\[(x_1,\ldots,x_{r-1})=\left(\frac{c-\left[\frac{c+\pi}{2\pi}\right] 2\pi}{r}+2\pi k_1,\ldots,\frac{c-\left[\frac{c+\pi}{2\pi}\right] 2\pi}{r}+2\pi k_{r-1}\right),\quad k_j \in \mathbb{Z},\]
where here and in the following \([\cdot]\) denotes the floor function. On the other hand, if we have \(c=(2l+1)\pi\) for an integer \(l\in\mathbb{Z}\), then the function \(f_c\) attains its global maximum exactly in the points
\[(x_1,\ldots,x_{r-1})=\left(-\frac{\pi}{r}+2\pi k_1,\ldots,-\frac{\pi}{r}+2\pi k_{r-1}\right),\quad k_j \in \mathbb{Z},\]
and
\[(x_1,\ldots,x_{r-1})=\left(\frac{\pi}{r}+2\pi k_1,\ldots,\frac{\pi}{r}+2\pi k_{r-1}\right),\quad k_j \in \mathbb{Z}.\]
Now, for a fixed \(c\in\mathbb{R}\), let us consider the function 
\[\tilde{g}(x_1,\ldots,x_r)=\sum_{j=1}^r\cos{x_j}\]
defined on the hyperplane \(H_c =\left\{(x_1,\ldots,x_r) \in \mathbb{R}^r\vert \sum_{j=1}^r x_j =c\right\}\). If we have \(c\in\mathbb{R}\backslash\{(2l+1)\pi\vert l\in \mathbb{Z}\}\), then the function \(\tilde{g}\) attains its global maximum exactly at the points (\(k_j \in \mathbb{Z}\)) 
\[\left(\frac{c-\left[\frac{c+\pi}{2\pi}\right] 2\pi}{r}+2\pi k_1,\ldots,\frac{c-\left[\frac{c+\pi}{2\pi}\right] 2\pi}{r}+2\pi k_{r-1}, \frac{c-\left[\frac{c+\pi}{2\pi}\right] 2\pi}{r}+\left[\frac{c+\pi}{2\pi}\right] 2\pi-2\pi\sum_{j=1}^{r-1}k_j\right).\]
On the other hand, if we have \(c=(2l+1)\pi\) for an integer \(l\in\mathbb{Z}\), then \(\tilde{g}\) attains its global maximum exactly at the points 
\[\left(-\frac{\pi}{r}+2\pi k_1,\ldots,-\frac{\pi}{r}+2\pi k_{r-1},-\frac{\pi}{r}+2\pi(l+1)-2\pi \sum_{j=1}^{r-1}k_j\right),\quad k_j \in \mathbb{Z},\]
and
\[\left(\frac{\pi}{r}+2\pi k_1,\ldots,\frac{\pi}{r}+2\pi k_{r-1},\frac{\pi}{r}+2\pi l-2\pi \sum_{j=1}^{r-1}k_j\right),\quad k_j \in \mathbb{Z}.\]
Considering the restriction of the function \(\tilde{g}\) from \(H_c\) to the set \(H_c \cap [-\pi,\pi]^r\), we can state the following: In the case \(c\notin[\pi,\pi]\) none of the above mentioned points at which \(\tilde{g}\) attains its global maximum (with respect to the domain \(H_c\)) is located in \(H_c \cap [-\pi,\pi]^r\). On the other hand, if we have \(c\in[\pi,\pi]\), then \(\tilde{g}\) attains its global maximum with respect to \(H_c \cap [-\pi,\pi]^r\) exactly at the point \((\frac{c}{r},\ldots,\frac{c}{r})\). This implies for \(c\in[-r\pi,r\pi]\backslash[-\pi,\pi]\)
\begin{equation}\label{L1}\max_{H_c \cap [-\pi,\pi]^r} \sum_{j=1}^r \cos{x_j} < \max_{H_{c-\left[\frac{c+\pi}{2\pi}\right] 2\pi} \cap [-\pi,\pi]^r} \sum_{j=1}^r \cos{x_j}.
\end{equation}
Now, in order to study the function \(h\) with respect to its maximum on \([-\pi,\pi]^r\), we will cut its domain into disjoint slices 
\[[-\pi,\pi]^r=\bigcup_{c\in[-r\pi,r\pi]} H_c \cap [-\pi,\pi]^r.\]
Using (\ref{L1}) and observing that the value of \(\cos\left(\sum_{j=1}^r x_j\right)\) remains the same as we move from the slice \(H_c \cap [-\pi,\pi]^r\) to the slice \(H_{c-\left[\frac{c+\pi}{2\pi}\right] 2\pi} \cap [-\pi,\pi]^r\), we can conclude 
\[\max_{c\in[-r\pi,r\pi]\backslash[-\pi,\pi]}~ \max_{H_c \cap [-\pi,\pi]^r} h(x_1,\ldots,x_r)<\max_{c\in[\pi,\pi]}~ \max_{H_c \cap [-\pi,\pi]^r} h(x_1,\ldots,x_r)\]
and thus
\begin{align*}&\max_{[-\pi,\pi]^r} h(x_1,\ldots,x_r)=\max_{c\in[-\pi,\pi]}~\max_{H_c \cap [-\pi,\pi]^r} h(x_1,\ldots,x_r) \\
&=\max_{c\in[-\pi,\pi]}\Bigg\{\exp{\left(2\frac{\sin{(r+1)\varphi}}{\sin{r\varphi}}\max_{H_c \cap [-\pi,\pi]^r}\sum_{j=1}^{r}\cos{x_j}\right)}\\
&~~\times\left(\sin^2{\varphi}+\sin^2{(r+1)\varphi}-2\sin{\varphi}\sin{(r+1)\varphi}\cos{c}\right)\Bigg\},
\end{align*}
where for every \(c\in[-\pi,\pi]\)
\[\max_{H_c \cap [-\pi,\pi]^r} \sum_{j=1}^{r}\cos{x_j}=r\cos{\frac{c}{r}}\]
is attained only in the point \((\frac{c}{r},\ldots,\frac{c}{r})\). Hence, we obtain
\begin{align}\label{L1.1}\max_{[-\pi,\pi]^r} h(x_1,\ldots,x_r)=\max_{c\in[-\pi,\pi]}\Bigg\{\exp{\left(2\frac{\sin{(r+1)\varphi}}{\sin{r\varphi}}r\cos{\frac{c}{r}}\right)}\\\nonumber
\times\left(\sin^2{\varphi}+\sin^2{(r+1)\varphi}-2\sin{\varphi}\sin{(r+1)\varphi}\cos{c}\right)\Bigg\}.
\end{align}
Next, let us determine the maxima of the function \(\tilde{h}:[-\pi,\pi]\rightarrow \mathbb{R}\) defined by
\[\tilde{h}(c)=\exp{\left(2\frac{\sin{(r+1)\varphi}}{\sin{r\varphi}}r\cos{\frac{c}{r}}\right)}\left(\sin^2{\varphi}+\sin^2{(r+1)\varphi}-2\sin{\varphi}\sin{(r+1)\varphi}\cos{c}\right).\]
As \(\tilde{h}\) is symmetric with respect to the origin, we can restrict our attention to the interval \([0,\pi]\). We have for its derivative
\begin{align*}\tilde{h}'(c)=&2\exp{\left(2\frac{\sin{(r+1)\varphi}}{\sin{r\varphi}}r\cos{\frac{c}{r}}\right)}\sin{(r+1)\varphi}~\sin{\varphi}~\sin{\frac{c}{r}}\\
&~\times\left\{\frac{\sin{c}}{\sin{\frac{c}{r}}}-\left(\frac{\sin^2{\varphi} +\sin^2{(r+1)\varphi}}{\sin{r\varphi}~\sin{\varphi}}-\frac{2\sin{(r+1)\varphi}}{\sin{r\varphi}}\cos{c}\right)\right\}.
\end{align*}
On the interval \((0,\pi)\) the function \(\frac{\sin{c}}{\sin{\frac{c}{r}}}\) is strictly decreasing from \(r\) to \(0\), whereas the function
\[\frac{\sin^2{\varphi} +\sin^2{(r+1)\varphi}}{\sin{r\varphi}~\sin{\varphi}}-\frac{2\sin{(r+1)\varphi}}{\sin{r\varphi}}\cos{c}\]
is strictly increasing from 
\[\frac{(\sin{\varphi}-\sin{(r+1)\varphi})^2}{\sin{\varphi}~\sin{r\varphi}}<r\quad \text{to}\quad \frac{(\sin{\varphi}+\sin{(r+1)\varphi})^2}{\sin{\varphi}~\sin{r\varphi}}.\]
Hence, there exists a unique \(c_0 \in (0,\pi)\) with \(\tilde{h}'(c_0)=0\). Now, by an elementary calculation we can verify \(c_0=r\varphi\), and we have \(\tilde{h}'(c)>0\) on the interval \((0,r\varphi)\) and \(\tilde{h}'(c)<0\) on the interval \((r\varphi,\pi)\). From this we conclude that the function \(\tilde{h}\) attains its global maximum on \([-\pi,\pi]\) exactly at the points \(c=r\varphi\) and \(c=-r\varphi\). Together with (\ref{L1.1}) this establishes the proof.
\end{proof}
\section{Main Results}
Now we turn to the first main result which describes the asymptotic behavior of the rescaled polynomials \(F_n (n^r x)\).
\begin{theorem}[Asymptotics of Plancherel-Rotach type]\label{PRA} Let \(r\in\mathbb{N}\) and \(\nu_1, \ldots, \nu_r \in \mathbb{N}_0\) be arbitrary integers, then for the polynomials \(F_n\) defined in (\ref{DefF}) we have 
\begin{align} \nonumber F_n (n^r x)=&\frac{2}{(2\pi)^{r/2}} \left(\frac{\sin{r\varphi}}{n \sin{(r+1)\varphi}}\right)^{\frac{r}{2}+\nu_1+\ldots+\nu_r} \exp\left\{nr\frac{\sin{(r+1)\varphi}}{\sin{r\varphi}} \cos{\varphi}\right\}\\ \label{PR2}
\times&\left(-\frac{\sin{r\varphi}}{\sin{\varphi}}\right)^n \left\{\left(1-\frac{r\sin{\varphi}\cos{(r+1)\varphi}}{\sin{r\varphi}}\right)^2+\left(\frac{r\sin{\varphi}\sin{(r+1)\varphi}}{\sin{r\varphi}}\right)^2\right\}^{-1/4}\\\nonumber
\times&\left\{\cos{\left(n\left(r\frac{\sin{(r+1)\varphi}}{\sin{r\varphi}}\sin{\varphi}-(r+1)\varphi\right)+g(r,\nu,\varphi)\right)}+o(1)\right\},
\end{align}
as \(n\rightarrow \infty\). In (\ref{PR2}) we use
\[x=\frac{\left(\sin{(r+1)\varphi}\right)^{r+1}}{\sin{\varphi}\left(\sin{r\varphi}\right)^r},\quad 0<\varphi<\frac{\pi}{r+1},\] 
parameterizing the interval \(\left(0,\frac{(r+1)^{r+1}}{r^r}\right)\), and the phase shift \(g(r,\nu,\varphi)\) is given by
\begin{equation}\label{Shift}g(r,\nu,\varphi)=-\left(\frac{r}{2}+\nu_1+\ldots+\nu_r\right)\varphi-\frac{1}{2}\arctan{\left(\frac{-r\sin{\varphi}\sin{(r+1)\varphi}}{\sin{r\varphi}-r\sin{\varphi \cos{(r+1)\varphi}}}\right)}.
\end{equation}
\end{theorem}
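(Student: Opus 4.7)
The plan is to rewrite $F_n(n^r x)$ as a multivariate contour integral to which Theorem \ref{MSP} can be applied. Using the Hankel representation $\frac{1}{(k+\nu_j)!}=\frac{1}{2\pi i}\oint e^{z_j}z_j^{-k-\nu_j-1}\,dz_j$ for each factorial in the denominator of (\ref{DefF}), interchanging sum and integrals, using Newton's binomial theorem to carry out the sum over $k$, and rescaling $z_j=nt_j$, one obtains
\[F_n(n^r x)=\frac{n^{-(\nu_1+\cdots+\nu_r)}}{(2\pi i)^r}\oint\!\cdots\!\oint e^{n(t_1+\cdots+t_r)}\left(1-\frac{x}{t_1\cdots t_r}\right)^{n}\prod_{j=1}^{r}t_j^{-\nu_j-1}\,dt_1\cdots dt_r,\]
with the original contours being small loops around the origin. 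Introducing $S(\boldsymbol{t})=\sum_j t_j+\log(1-x/(t_1\cdots t_r))$, the critical-point system $\partial S/\partial t_j=0$ is symmetric in the $t_j$, so I impose $t_1=\cdots=t_r=t$, which collapses it to $t^{r+1}=x(t-1)$. A short calculation using $\sin(r+1)\varphi\cos\varphi-\sin r\varphi=\cos(r+1)\varphi\sin\varphi$ shows that $t_{*}:=A\,e^{i\varphi}$ with $A:=\sin(r+1)\varphi/\sin r\varphi$ solves this equation; by conjugation so does $\overline{t_*}$, yielding a complex-conjugate pair of saddle points.

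Second, I deform the product contour to the torus $\{|t_j|=A\}$, which is permissible because the integrand's only singularities are at $t_j=0$. Parameterising $t_j=Ae^{ix_j}$ with $x_j\in[-\pi,\pi]$, a direct computation shows that $|e^{n\sum_j t_j}(1-x/(t_1\cdots t_r))^n|^{2}$ equals, up to an $\boldsymbol{x}$-independent factor, the $n$-th power of the function $h(x_1,\ldots,x_r)$ from Lemma \ref{Inequality}. The lemma therefore guarantees that the modulus of the integrand on the torus attains its global maximum precisely at $(\varphi,\ldots,\varphi)$ and $(-\varphi,\ldots,-\varphi)$, corresponding to $t_*$ and $\overline{t_*}$. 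This simultaneously identifies the dominant saddles and ensures that the part of the torus outside arbitrarily small neighbourhoods of them contributes only $o(1)$ times the main term.

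Third, I apply Theorem \ref{MSP} locally near each saddle (after the change of variables $t_j=A\,e^{i(\varphi+x_j)}$ to put the integral in the form required by the theorem). The Hessian of $p=-S$ at $t_*$ has diagonal entries $(t_*-2)/t_*$ and off-diagonal entries $(t_*-1)/t_*$, and a standard determinant computation for matrices of type "$aI+bJ$" gives $\det\Hess p(t_*)=(-1)^{r-1}(r t_*-(r+1))/t_*^{\,r}$. A direct check of real and imaginary parts yields the key identity
\[|r t_*-(r+1)|^2=\left(1-\frac{r\sin\varphi\cos(r+1)\varphi}{\sin r\varphi}\right)^{2}+\left(\frac{r\sin\varphi\sin(r+1)\varphi}{\sin r\varphi}\right)^{2},\]
which reproduces the amplitude $\{\cdots\}^{-1/4}$ of (\ref{PR2}). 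Combining this with $q(t_*)=t_*^{-\sum\nu_j-r}$ and with the value $e^{nS(t_*)}=(-1)^n(\sin r\varphi/\sin\varphi)^{n}\exp(nrA\cos\varphi)\exp(in(rA\sin\varphi-(r+1)\varphi))$ (using $t_*-1=(\sin\varphi/\sin r\varphi)e^{i(r+1)\varphi}$), one recovers the contribution at $t_*$; the contribution at $\overline{t_*}$ is its complex conjugate, and adding the two produces $2\Re(\cdots)$ and hence the cosine factor in (\ref{PR2}).

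The main obstacle will be the branch and phase bookkeeping in this last step: one must fix the correct branch of $\sqrt{\det\Hess p(t_*)}$ (the Maslov-type prescription referred to in the remark after Theorem \ref{MSP}) and combine its argument with those of $t_*^{-\sum\nu_j-r/2}$ and of $e^{in\Im S(t_*)}$ so that they assemble into the phase shift (\ref{Shift}). Concretely, the linear term $-(r/2+\nu_1+\cdots+\nu_r)\varphi$ in $g(r,\nu,\varphi)$ arises from $\arg t_*^{-(\sum\nu_j+r/2)}$, while the arctangent term comes from $-\tfrac{1}{2}\arg((r+1)-r t_*)$, and the absorption of the factor $e^{in\pi}=(-1)^n$ into the amplitude $(-\sin r\varphi/\sin\varphi)^n$ must be tracked with care.
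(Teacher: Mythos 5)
Your proposal is correct and follows essentially the same route as the paper: the same Hankel-type contour representation rescaled by $w_j\to nw_j$, the same saddle equation $w^{r+1}-xw+x=0$ with the conjugate pair $Ae^{\pm i\varphi}$, localization on the torus via Lemma \ref{Inequality}, and an application of Theorem \ref{MSP} at each saddle whose conjugate contributions combine into the cosine with the phase shift (\ref{Shift}). The only cosmetic difference is that you compute the Hessian in the original $w$-coordinates (giving $(-1)^{r-1}(rt_*-(r+1))/t_*^{\,r}$) rather than in the angular variables as the paper does, which is equivalent once the Jacobian factors of the torus parameterization are tracked.
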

\begin{proof} At first we establish a representation for the polynomials \(F_n\) as a multivariate complex contour integral which is suitable for determining the asymptotic behavior via an application of the multivariate method of saddle points. To this end, using the binomial theorem and 
\[\frac{1}{2\pi i}\int\limits_{C} \exp{z}\frac{dz}{z^{N+1}} =\frac{1}{N!},\]
where \(N\in\mathbb{N}\) and \(C\) denotes a positive-oriented contour encircling the origin, it can easily be verified that we have
\[F_n (x)=\frac{1}{(2\pi i)^r}\int\limits_{\Gamma} \exp{\bigg(\sum_{j=1}^r w_j \bigg)}\left(1-\frac{x}{\prod_{j=1}^r w_j}\right)^n \frac{dw}{\prod_{j=1}^r w_j^{\nu_j +1}},\]
where \(w=(w_1,\ldots,w_r)\) and \(\Gamma\) is given as an \(r\)-fold product of positive-oriented contours in the complex plane encircling the origin. By the change of variables \(w_j \rightarrow nw_j\), for all \(j\in \{1,\dots,r\}\), we obtain
\begin{equation}\label{IR1}F_n (n^r x)=\frac{1}{n^{\nu_1+\ldots+\nu_r} (2\pi i)^r}\int\limits_{\Gamma} \Bigg\{\exp{\bigg(\sum_{j=1}^r w_j \bigg)}\left(1-\frac{x}{\prod_{j=1}^r w_j}\right) \Bigg\}^n\frac{dw}{\prod_{j=1}^r w_j^{\nu_j +1}}.
\end{equation}
Studying the multivariate saddle points of the function
\[\exp{\bigg(\sum_{j=1}^r w_j \bigg)}\left(1-\frac{x}{\prod_{j=1}^r w_j}\right)\]
shows that all saddle points are of the form \((w_1,\ldots,w_r)=(w,\ldots,w)\) where \(w\) satisfies the trinomial algebraic equation
\begin{equation}\label{SE}w^{r+1}-xw+x=0.
\end{equation}
In the case we are interested in, the number \(x>0\) will be located in an interval on which the polynomials \(F_n (n^r x)\) show an oscillatory behavior. Thus, we are expecting the essential contributions for the asymptotics coming from two particular saddle points which are complex conjugates. Having this in mind, introducing polar coordinates of the form \(w(\varphi)=a(\varphi)e^{i\varphi}\) and carefully studying the imaginary and the real part of equation (\ref{SE}) it turns out that using the parameterization 
\[x=\frac{\left(\sin{(r+1)\varphi}\right)^{r+1}}{\sin{\varphi}\left(\sin{r\varphi}\right)^r},\quad 0<\varphi<\frac{\pi}{r+1},\]
two roots of (\ref{SE}) are located at the points 
\[w=\frac{\sin{(r+1)\varphi}}{\sin{r\varphi}}e^{i\varphi} \quad\text{and}\quad w=\frac{\sin{(r+1)\varphi}}{\sin{r\varphi}}e^{-i\varphi}.\]
Thus, we are lead to use in (\ref{IR1}) the parameterizations (\(j\in\{1,\ldots,r\}\))
\[w_j=\frac{\sin{(r+1)\varphi}}{\sin{r\varphi}}e^{it_j},\quad t_j\in [-\pi,\pi].\]
This yields
\begin{align}\nonumber
&F_n (n^r x)=\frac{1}{(2\pi)^r} \left(\frac{\sin{r\varphi}}{n \sin{(r+1)\varphi}}\right)^{\nu_1+\ldots+\nu_r}\\\label{IR2}
~&\times\int\limits_{[-\pi,\pi]^r} \Bigg\{\exp{\bigg(\frac{\sin{(r+1)\varphi}}{ \sin{r\varphi}}\sum_{j=1}^r e^{it_j} \bigg)}\left(1-\frac{\sin{(r+1)\varphi}}{ \sin{\varphi}\prod_{j=1}^r e^{it_j}}\right)\Bigg\}^n \frac{dt}{\prod_{j=1}^r e^{i\nu_j t_j}}\\\nonumber
&\quad\quad=\frac{1}{(2\pi)^r} \left(\frac{\sin{r\varphi}}{n \sin{(r+1)\varphi}}\right)^{\nu_1+\ldots+\nu_r} \int\limits_{[-\pi,\pi]^r}e^{-n p(t)}q(t) dt, 
\end{align}
where \(t=(t_1,\ldots,t_r)\) and the functions \(p\) and \(q\) are defined by
\[p(t)=-\frac{\sin{(r+1)\varphi}}{ \sin{r\varphi}}\sum_{j=1}^r e^{it_j}-\log\left(1-\frac{\sin{(r+1)\varphi}}{ \sin{\varphi}\prod_{j=1}^r e^{it_j}}\right),\]
and
\[q(t)=\exp{\left\{-i\Bigg(\sum_{j=1}^r\nu_j t_j\Bigg)\right\}}.\]
An application of Lemma \ref{Inequality} shows that the function \(\vert e^{-p(t)}\vert = h(t)\) attains its global maximum exactly at the points \(t=(\varphi,\ldots,\varphi)\) and \(t=(-\varphi,\ldots,-\varphi)\). According to this, we split the integral into two parts and we obtain
\begin{equation}\label{Split}F_n (n^r x)=\frac{1}{(2\pi)^r} \left(\frac{\sin{r\varphi}}{n \sin{(r+1)\varphi}}\right)^{\nu_1+\ldots+\nu_r}\left\{I_n^{(1)}(\varphi)+I_n^{(2)}(\varphi)\right\},
\end{equation}
where
\[I_n^{(1)}(\varphi)=\int\limits_{E_1}e^{-n p(t)}q(t) dt\quad\text{and}\quad I_n^{(2)}(\varphi)=\int\limits_{E_2}e^{-n p(t)}q(t) dt\]
with
\[E_1 =\left\{x\in[-\pi,\pi]^r\Bigg\vert \sum_{j=1}^r x_j >0 \right\}\quad\text{and}\quad E_2 =\left\{x\in[-\pi,\pi]^r\Bigg\vert \sum_{j=1}^r x_j <0 \right\}.\]
Calculating the complex partial derivatives of \(p\) at the saddle points yields
\[\grad p (\varphi,\ldots,\varphi)=\grad p (-\varphi,\ldots,-\varphi)=0.\]
Moreover, we obtain
\begin{align*}
&\Hess p (\varphi,\ldots,\varphi) =\overline{\Hess p (-\varphi,\ldots,-\varphi)}\\
&=\frac{\sin{(r+1)\varphi}}{\sin{r\varphi}} e^{i\varphi} I_r - \frac{\sin{(r+1)\varphi}\, \sin{\varphi}}{\sin^2{r\varphi}} e^{i(r+2)\varphi} J_r,
\end{align*}
where \(I_r\) denotes the \(r\)-dimensional identity matrix and \(J_r=(1)_{1\leq j,k \leq r}\).
Hence, we can evaluate the determinants and we obtain
\[\det\Hess p (\varphi,\ldots,\varphi)=\overline{\det\Hess p (-\varphi,\ldots,-\varphi)}=\left(\frac{\sin{(r+1)\varphi}}{\sin{r\varphi}} e^{i\varphi}\right)^r \left(1-\frac{r \sin{\varphi}}{\sin{r\varphi}}e^{i(r+1)\varphi}\right).\]
Furthermore, we have for the determinants of the real parts of the Hessians
\begin{align*}
&\det \Re\Hess p (\varphi,\ldots,\varphi) =\det \Re\Hess p (-\varphi,\ldots,-\varphi)\\
&=\left(\frac{\sin{(r+1)\varphi}}{\sin{r\varphi}} \right)^r (\cos{\varphi})^{r-1} \left\{\cos{\varphi}-\frac{r \sin{\varphi \,\cos{(r+2)\varphi}}}{\sin{r\varphi}}\right\} >0.
\end{align*}
Now, by an application of Theorem \ref{MSP} (which, for simplicity, is formulated for saddle points located at the origin) we obtain
\[I_n^{(1)}(\varphi)=\left(\frac{2\pi}{n}\right)^{r/2} \frac{e^{-n p(\varphi,\ldots,\varphi)}q(\varphi,\ldots,\varphi)}{\left(\det\Hess p (\varphi,\ldots,\varphi)\right)^{1/2}}\left(1+o(1)\right),\]
as \(n\rightarrow \infty\). After an elementary calculation we arrive at
\begin{align*} I_n^{(1)}(\varphi)=&\left(\frac{2\pi}{n}\right)^{r/2}  \left(\frac{\sin{r\varphi}}{ \sin{(r+1)\varphi}}\right)^{\frac{r}{2}} \exp\left\{nr\frac{\sin{(r+1)\varphi}}{\sin{r\varphi}} \cos{\varphi}\right\}\\ 
\times&\left(-\frac{\sin{r\varphi}}{\sin{\varphi}}\right)^n \left\{\left(1-\frac{r\sin{\varphi}\cos{(r+1)\varphi}}{\sin{r\varphi}}\right)^2+\left(\frac{r\sin{\varphi}\sin{(r+1)\varphi}}{\sin{r\varphi}}\right)^2\right\}^{-1/4}\\
\times&\exp{\left\{i\left(n\left(r\frac{\sin{(r+1)\varphi}}{\sin{r\varphi}}\sin{\varphi}-(r+1)\varphi\right)+g(r,\nu,\varphi)\right)\right\}}(1+o(1)),
\end{align*}
as \(n\rightarrow\infty\), where the function \(g(r,\nu,\varphi)\) is defined in (\ref{Shift}). Let us write the latter expression in the form
\[I_n^{(1)}(\varphi)=G_n(\varphi)\exp\left\{i(n f(\varphi)+g(r,\nu,\varphi))\right\}(1+o(1)),\]
then we obtain in the same manner
\[I_n^{(2)}(\varphi)=G_n(\varphi)\exp\left\{-i(n f(\varphi)+g(r,\nu,\varphi))\right\}(1+o(1)),\]
as \(n\rightarrow\infty\).
Hence, we can conclude from (\ref{Split}) that we have
\[F_n (n^rx)=\frac{1}{(2\pi)^r} \left(\frac{\sin{r\varphi}}{n \sin{(r+1)\varphi}}\right)^{\nu_1+\ldots+\nu_r}2G_n(\varphi)\left\{\cos{\left(n f(\varphi)+g(r,\nu,\varphi)\right)}+o(1)\right\},\]
as \(n\rightarrow\infty\). From this statement the theorem follows.
\end{proof}

\begin{remark} As a special case of Theorem \ref{PRA} we obtain a formula of Plancherel-Rotach type for the classical Laguerre polynomials. For \(x=4n\cos^2{\varphi}\), \(0<\varphi<\frac{\pi}{2}\), we have
\[L_n^{(0)}(x)=(-1)^n \exp{\left\{2n\cos^2\varphi\right\}} (\pi n \sin{2\varphi})^{-1/2} \left\{\cos{\left(n(\sin{(2\varphi)}-2\varphi)-\varphi+\frac{\pi}{4}\right)} +o(1)\right\},\]
as \(n\rightarrow\infty\). This result resembles the classical result on those polynomials which uses a slightly different parameterization (see \cite{Szego}, p. 200): For \(x=(4n+2)\cos^2{\varphi}\), \(0<\varphi<\frac{\pi}{2}\), we have
\begin{align*}L_n^{(0)}(x)=&(-1)^n \exp{\left\{(2n+1)\cos^2\varphi\right\}} \left(\frac{\pi}{2} \sin{2\varphi}\right)^{-1/2} (2n(2n+1))^{-1/4}\\ &\times\left\{\sin{\left((n+1/2)(\sin{(2\varphi)}-2\varphi)+\frac{3\pi}{4}\right)} +o(1)\right\},
\end{align*}
as \(n\rightarrow\infty\).
\end{remark}
\begin{remark}\label{R2} As known from the case of the classical Laguerre polynomials \(L_n^{(0)}\), for instance, by a careful study of the remainder terms in the proofs it is possible to show that the asymptotic approximation in (\ref{PR2}) holds uniformly with respect to \(\epsilon \leq \varphi \leq \frac{\pi}{r+1}-\epsilon\) for arbitrary small \(\epsilon>0\). 
\end{remark}

For the purpose of illustration we take a look at a plot for the case \(r=3\), \(\nu_1 =2\), \(\nu_2=4\), \(\nu_3=5\) and \(n=150\). Showing the interval \(\left[0.5 \pi/4, 0.55\pi/4\right]\), in Fig. 1 the normalized polynomial \(\tilde{F}_n\) (solid line) and the associated cosine approximant \(c_n(\varphi)\) (dashed line) are plottet, where we have
\[\tilde{F}_n(\varphi) =\frac{F_n \left(n^r\frac{\left(\sin{(r+1)\varphi}\right)^{r+1}}{\sin{\varphi}\left(\sin{r\varphi}\right)^r}\right) \left\{\left(1-\frac{r\sin{\varphi}\cos{(r+1)\varphi}}{\sin{r\varphi}}\right)^2+\left(\frac{r\sin{\varphi}\sin{(r+1)\varphi}}{\sin{r\varphi}}\right)^2\right\}^{1/4}}{\frac{2}{(2\pi)^{r/2}} \left(\frac{\sin{r\varphi}}{n \sin{(r+1)\varphi}}\right)^{\frac{r}{2}+\nu_1+\ldots+\nu_r} \exp\left\{nr\frac{\sin{(r+1)\varphi}}{\sin{r\varphi}} \cos{\varphi}\right\}\left(-\frac{\sin{r\varphi}}{\sin{\varphi}}\right)^n },\]
and 
\[c_n(\varphi)=\cos{\left(n\left(r\frac{\sin{(r+1)\varphi}}{\sin{r\varphi}}\sin{\varphi}-(r+1)\varphi\right)+g(r,\nu,\varphi)\right)},\]
where the phase shift \(g(r,\nu,\varphi)\) is defined in (\ref{Shift}).

\begin{figure}[!htb]
\centering
\includegraphics[scale=0.5]{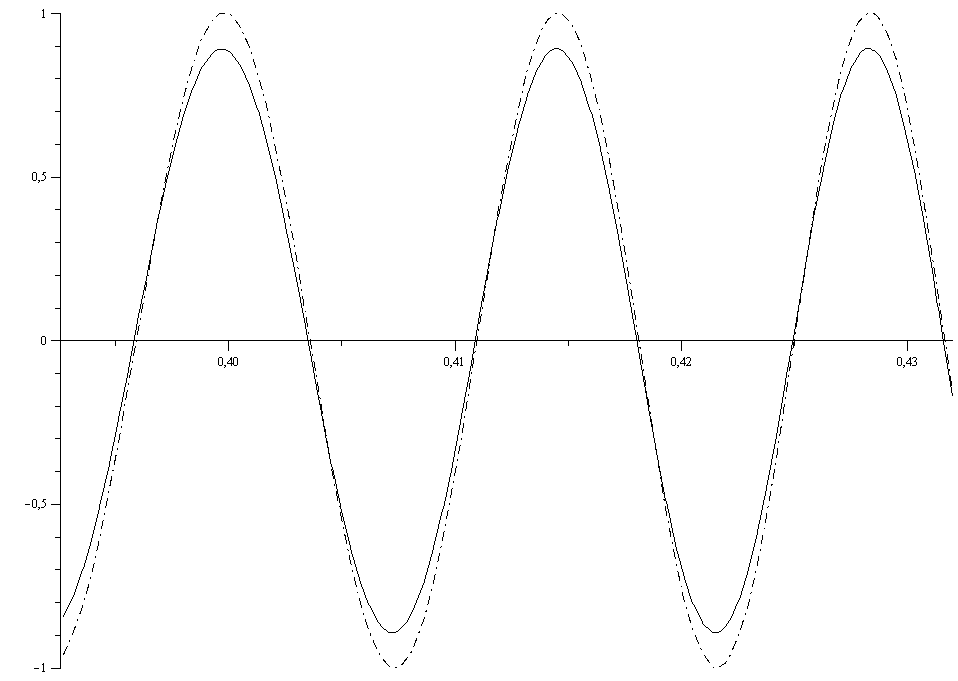}
\caption{The normalized polynomial \(\tilde{F}_n\) (solid line) and its cosine approximant \(c_n\) (dashed line)}
\label{Fig1}
\end{figure}

\bigskip

The next aim is to study the behavior of the zeros of the rescaled polynomials \(F_n (n^rx)\). Therefore, in regard to Theorem \ref{PRA}, let the functions \(\rho\) and \(f\) be defined by
\begin{equation}\label{rho}\rho:\left(0, \frac{\pi}{r+1}\right)\rightarrow \left(0, \frac{(r+1)^{r+1}}{r^r}\right), \quad \rho(\varphi)=\frac{\left(\sin{(r+1)\varphi}\right)^{r+1}}{\sin{\varphi}\left(\sin{r\varphi}\right)^r},
\end{equation}
\begin{equation}\label{f}f:\left(0, \frac{\pi}{r+1}\right)\rightarrow \left(0, \pi\right), \quad f(\varphi)=(r+1)\varphi-r\frac{\sin{(r+1)\varphi}}{\sin{r\varphi}}\sin{\varphi}.
\end{equation}
The function \(\rho\) is a strictly decreasing bijection, whereas the function \(f\) is a strictly increasing bijection. Hence, the composition \(f\circ \rho^{-1}\) is a strictly decreasing mapping from \(\left(0, \frac{(r+1)^{r+1}}{r^r}\right)\) onto \(\left(0, \pi\right)\), which admits a continuous extension of the same kind to the interval \(\left[0, \frac{(r+1)^{r+1}}{r^r}\right]\). Moreover, let the function \(V:\mathbb{R}\rightarrow [0,1]\) be defined by

\begin{equation}\label{V}V(x) = \begin{cases} 0 &\mbox{if } x \leq 0 \\
1-\frac{1}{\pi}\left(f\circ \rho^{-1}\right)(x) & \mbox{if } 0<x<\frac{(r+1)^{r+1}}{r^r}\\
1 &\mbox{if } x\geq \frac{(r+1)^{r+1}}{r^r}.\end{cases} 
\end{equation}
As it is not difficult to see that \(V\) is an increasing function on \(\mathbb{R}\) (with values in \([0, 1]\)) we can consider \(V\) as a probability distribution function.

\begin{theorem}[Asymptotic zero distribution]\label{WA}Let \(r\in\mathbb{N}\) and \(\nu_1, \ldots, \nu_r \in \mathbb{N}_0\) be arbitrary integers and let \((\mu_n)_n\) denote the sequence of normalized zero counting measures associated to the polynomials \(F_n (n^rx)\). Then the sequence \((\mu_n)_n\) converges in the weak-star sense to a unit measure \(\mu\) supported on \(\left[0, \frac{(r+1)^{r+1}}{r^r}\right]\) which is defined by the distribution function \(V\) in (\ref{V}). Moreover, the limit measure \(\mu\) coincides with the Fuss-Catalan distribution of order \(r\).
\end{theorem}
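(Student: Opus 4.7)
The plan is to combine the uniform oscillatory asymptotics from Theorem \ref{PRA} (via Remark \ref{R2}) with a standard zero-counting argument, and then to identify the limit measure with the Fuss-Catalan distribution through its moments.

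For the first step, fix an arbitrary $\varepsilon>0$ and set $J_\varepsilon=[\varepsilon,\pi/(r+1)-\varepsilon]$. By Theorem \ref{PRA} and Remark \ref{R2}, uniformly in $\varphi\in J_\varepsilon$,
\[
F_n(n^r\rho(\varphi))=A_n(\varphi)\bigl\{\cos(nf(\varphi)+g(r,\nu,\varphi))+o(1)\bigr\},
\]
where $A_n(\varphi)$ is a prefactor of constant sign on $J_\varepsilon$ (each continuous $\varphi$-dependent factor in (\ref{PR2}) is strictly positive there, and the overall sign is $(-1)^n$). Since $f$ is $C^1$ with $f'$ bounded above and away from zero on $J_\varepsilon$, the phase $nf(\varphi)+g(r,\nu,\varphi)$ is strictly increasing with total variation $n(f(\pi/(r+1)-\varepsilon)-f(\varepsilon))+O(1)$. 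An elementary intermediate-value argument then shows that the zeros of $F_n(n^r\rho(\varphi))$ in $J_\varepsilon$ lie within $O(1/n)$ of the solutions of $nf(\varphi)+g(r,\nu,\varphi)\equiv\pi/2\pmod\pi$, and the number of zeros in any sub-interval $[\varphi_1,\varphi_2]\subset J_\varepsilon$ equals $n(f(\varphi_2)-f(\varphi_1))/\pi+O(1)$.

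For the weak-star convergence, transport back to the $x$-variable via $x=\rho(\varphi)$, a strictly decreasing bijection by (\ref{rho}). For $0<a<b<(r+1)^{r+1}/r^r$ with $\rho^{-1}([a,b])\subset J_\varepsilon$,
\[
\mu_n([a,b])=\frac{f(\rho^{-1}(a))-f(\rho^{-1}(b))}{\pi}+O(1/n)=V(b)-V(a)+O(1/n).
\]
Letting $\varepsilon\downarrow 0$ exhausts $(0,(r+1)^{r+1}/r^r)$ by such intervals. Lemma \ref{Zeros} places all $n$ zeros of $F_n$ on the positive real axis, and the bound above together with the normalization $\mu_n(\mathbb{R})=1$ rules out escape of mass to $\{0\}$, to the right endpoint, or to $+\infty$. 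Hence $\mu_n$ converges in the weak-star sense to the probability measure $\mu$ with distribution function $V$.

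Finally, I would identify $\mu$ with the Fuss-Catalan distribution of order $r$, which is uniquely determined by its moment sequence, the Fuss-Catalan numbers $\frac{1}{rk+1}\binom{(r+1)k}{k}$. Since $\mu$ is compactly supported, the moments of $\mu_n$ converge to those of $\mu$, and either (a) a direct computation from the coefficients of $F_n$ in (\ref{DefF}) together with Newton's identities yields the Fuss-Catalan numbers in the limit, or (b) the change of variables $x=\rho(\varphi)$ reduces $\int x^k\,d\mu(x)$ to
\[
\frac{1}{\pi}\int_0^{\pi/(r+1)}\rho(\varphi)^k f'(\varphi)\,d\varphi,
\]
which, via a contour deformation guided by the algebraic relation (\ref{SE}) between $x$ and the saddle $w$, evaluates to the same binomial coefficient. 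I expect this last moment computation to be the main technical obstacle; it is routine but requires careful trigonometric bookkeeping, whereas the earlier steps are rather direct consequences of Theorem \ref{PRA}.
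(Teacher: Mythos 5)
Your first half---counting the zeros through the cosine approximant of Theorem \ref{PRA} with the uniformity of Remark \ref{R2}, transporting the count to the $x$-variable via $x=\rho(\varphi)$, and using Lemma \ref{Zeros} together with the limits of $\tfrac{1}{\pi}f(\rho^{-1}(\epsilon))$ at the endpoints to prevent escape of mass---is essentially the paper's argument; the paper merely formalizes the passage to the limit measure with Helly's selection principle and the Portmanteau theorem, but the content is the same.

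The genuine gap is in the final claim, the identification of $\mu$ with the Fuss-Catalan distribution: you reduce it to the moment identity $\frac{1}{\pi}\int_0^{\pi/(r+1)}\rho(\varphi)^k f'(\varphi)\,d\varphi=\frac{1}{rk+1}\binom{(r+1)k}{k}$, but you never prove it---you explicitly defer the ``trigonometric bookkeeping,'' and this integral is exactly the nontrivial point (the paper records it as a \emph{remarkable identity} obtained as a corollary of Theorem \ref{WA}, not as an input to its proof). Your alternative (a) has an additional unaddressed problem: weak-star convergence plus compactness of the support of $\mu$ does not by itself give convergence of the moments of $\mu_n$; you would also need the zeros of $F_n(n^rx)$ to lie in a fixed compact set, which you have not established. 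The paper sidesteps any moment computation: it takes the Stieltjes transform $F$ of the Fuss-Catalan law, invokes the generating-function fact (P\'olya--Szeg\H{o}) that $w_1(z)=zF(z)$ satisfies $w^{r+1}-zw+z=0$, i.e.\ the same algebraic equation (\ref{SE}) as the saddle points, identifies the boundary values of $w_1$ on the cut $\left(0,\frac{(r+1)^{r+1}}{r^r}\right)$ with $\frac{\sin{(r+1)\varphi}}{\sin{r\varphi}}e^{\pm i\varphi}$, applies the Stieltjes--Perron inversion to obtain $v(\rho(\varphi))=\frac{(\sin\varphi)^2(\sin r\varphi)^{r-1}}{\pi(\sin(r+1)\varphi)^{r}}$, and then checks by differentiating $V$ that $-\frac{1}{\pi}\frac{f'(\varphi)}{\rho'(\varphi)}$ equals the same expression, since the numerators of $f'$ and $\rho'$ coincide. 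If you wish to complete your route (b), the same algebraic relation (Lagrange inversion or a residue computation for $w^{r+1}-xw+x=0$ with $w=\frac{\sin{(r+1)\varphi}}{\sin{r\varphi}}e^{i\varphi}$) is the natural tool; as written, however, the identification step is missing.
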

\begin{proof} Let \(\epsilon_1\) and \(\epsilon_2\) be real numbers with \(0<\epsilon_1 < \epsilon_2 < \frac{(r+1)^{r+1}}{r^r}\) and let us denote the zeros of \(F_n (n^rx)\) by \(x_{n1}\leq\ldots\leq x_{nn}\). At first we are interested in the behavior of 
\[\mu_n\left((\epsilon_1, \epsilon_2)\right)=\frac{1}{n} \left(\sum_{k=1}^n \delta_{x_{nk}}\right)((\epsilon_1, \epsilon_2))= \frac{1}{n} \left\vert\left\{k\in\{1,\ldots,n\}\vert x_{nk} \in (\epsilon_1, \epsilon_2)\right\}\right\vert\]
for large values of \(n\). Using the result (\ref{PR2}) together with Remark \ref{R2} we obtain following standard arguments
\begin{equation}\label{V1}\lim_{n\rightarrow \infty}\frac{1}{n} \left\vert\left\{k\in\{1,\ldots,n\}\vert x_{nk} \in (\epsilon_1, \epsilon_2)\right\}\right\vert =\frac{1}{\pi}f(\rho^{-1}(\epsilon_1))-\frac{1}{\pi}f(\rho^{-1}(\epsilon_2)),
\end{equation}
where the functions \(\rho\) and \(f\) are defined in (\ref{rho}), (\ref{f}). The arguments rely on the idea that for large values of \(n\) the result (\ref{PR2}) allows one to count the zeros of the cosine approximant instead of counting the zeros of the polynomial itself. Furthermore, we can observe
\[\lim_{\epsilon_1 \rightarrow 0} \frac{1}{\pi}f(\rho^{-1}(\epsilon_1)) =1\]
and
\[\lim_{\epsilon_2 \rightarrow \frac{(r+1)^{r+1}}{r^r}} \frac{1}{\pi}f(\rho^{-1}(\epsilon_2)) =0.\]
Now, by defining 
\begin{align*}\mu ((a,b))=&\frac{1}{\pi}f\left(\rho^{-1}\left(\min\left(\frac{(r+1)^{r+1}}{r^r},\max(a,0)\right)\right)\right)\\
&-\frac{1}{\pi}f\left(\rho^{-1}\left(\max\left(0,\min\left(b,\frac{(r+1)^{r+1}}{r^r}\right)\right)\right)\right)
\end{align*}
for arbitrary open intervals on the real axis we obtain a unit measure \(\mu\) on the borel sets with distribution function given by the function \(V\) in (\ref{V}). Moreover, it is not difficult to see that we have for all open intervals \((a,b)\subset\mathbb{R}\)
\[\lim_{n\rightarrow \infty}\frac{1}{n} \left(\sum_{k=1}^n \delta_{x_{nk}}\right)((a, b))=\mu ((a,b)).\]
In order to show that the measure \(\mu\) is the weak-star limit of the normalized zero counting measures \((\mu_n)_n\) it is sufficient to show that every subsequence of \((\mu_n)_n\) possesses a weak-star convergent subsequence with limit \(\mu\). Given an arbitrary subsequence of \((\mu_n)\), using Helly's selection principle (see, e.g., \cite{Saff/Totik}, p. 3) we can choose a weak-star convergent subsequence with limit measure \(\mu^{\ast}\), say. By an application of the Portmanteau theorem (see, e.g., \cite{Dudley}, p. 386) we obtain for every open interval 
\[\mu^{\ast}((a,b))\leq\mu((a,b)),\]
which, by virtue of the continuity of the distribution function \(V\), implies for every \(x\in\mathbb{R}\) 
\[\mu^{\ast}(\{x\})=0.\]
Hence, every open interval is a continuity set for \(\mu^{\ast}\) and again by an application of the Portmanteau theorem we have
\[\mu^{\ast}((a,b))=\mu((a,b)),\]
thus we obtain \(\mu^{\ast}=\mu\). 

Next, in order to show that \(\mu\) coincides with the Fuss-Catalan distribution we first take a look at the Stieltjes transform of the latter (abreviating \(x^{\ast} =\frac{(r+1)^{r+1}}{r^r}\))
\[F(z)=\int\limits_{0}^{x^{\ast}}\frac{v(x)}{z-x}dx,\]
where \(v(x)\) denotes its continuous density. Thus, we can consider \(F\) as a holomorphic function defined on \(\mathbb{C}\backslash[0,x^{\ast}]\). By expanding the integrand into a power series around infinity and using that the moments of the Fuss-Catalan distribution are given by the Fuss-Catalan numbers (see, e.g., \cite{Penson}) it is not difficult to see that we have for \(\vert z\vert > x^{\ast}\)
\begin{equation}\label{F} F(z)=\frac{1}{z}\sum_{n=0}^{\infty} \binom{rn+n}{n}\frac{1}{rn+1}\left(\frac{1}{z}\right)^n.
\end{equation}
Therefore, for instance from \cite{Polya1}, Part 3, Chapter 5, Ex. 211, we can conclude that the function \(w_1(z)=zF(z)\) satisfies the algebraic equation
\begin{equation}\label{AE} w(z)^{r+1}-zw(z)+z=0.
\end{equation}
Studying equation (\ref{AE}) yields that there are exactly two finite branch points which are located at \(z=0\) and \(z=x^{\ast}\) and for real \(z>x^{\ast}\) there are exactly two positive solutions of equation (\ref{AE}). One solution converges to unity as \(z\rightarrow +\infty\), so this one coincides with \(w_1(z)\), and the second positive solution behaves asymptotically like \(z^{1/r}\) as \(z\rightarrow +\infty\). Moreover, the associated sheets are connected to each other via the cut \((0,x^{\ast})\) and both branches converge to the value \(\frac{r+1}{r}\) as \(z\rightarrow x^{\ast}\), \(z>x^{\ast}\). From the proof of Theorem \ref{PRA} we remind the following: If we use the parameterization 
\[x=\rho(\varphi)=\frac{\left(\sin{(r+1)\varphi}\right)^{r+1}}{\sin{\varphi}\left(\sin{r\varphi}\right)^r},\quad 0<\varphi<\frac{\pi}{r+1},\]
in order to describe the values on the cut \(0<x<x^{\ast}\), two solutions of (\ref{AE}) are given by 
\begin{equation}\label{Roots}\frac{\sin{(r+1)\varphi}}{\sin{r\varphi}}e^{i\varphi} \quad\text{and}\quad \frac{\sin{(r+1)\varphi}}{\sin{r\varphi}}e^{-i\varphi},
\end{equation}
both converging to \(\frac{r+1}{r}\) as \(\varphi \rightarrow 0\), \(\varphi >0\). Hence, expressed by the new coordinate \(\varphi\), the functions in (\ref{Roots}) give the boundary values of \(w_1\) on the cut \((0,x^{\ast})\). Using the formula of Stieltjes-Perron we obtain for \(0<x<x^{\ast}\)
\begin{align*}v(x)&=\lim_{\epsilon \rightarrow 0+}\frac{1}{2\pi i} \left(F(x-i\epsilon)-F(x+i\epsilon)\right)\\
&=\lim_{\epsilon \rightarrow 0+}\frac{1}{2\pi i} \left(\frac{w_1(x-i\epsilon)}{x-i\epsilon}-\frac{w_1(x+i\epsilon)}{x+i\epsilon}\right).
\end{align*}
Now, putting \(x=\rho(\varphi)\) we have for \(0<\varphi<\frac{\pi}{r+1}\)
\begin{align}\nonumber v(\rho(\varphi))&=\frac{1}{\pi\rho(\varphi)}\frac{\sin(r+1)\varphi \sin\varphi}{\sin r\varphi}\\
&=\label{DEN}\frac{(\sin\varphi)^2 (\sin r\varphi)^{r-1}}{\pi (\sin(r+1)\varphi)^{r}}.
\end{align}
On the other hand, deriving the distribution function \(V\) we obtain the following expression for its density for \(0<\varphi<\frac{\pi}{r+1}\)
\begin{equation}\label{DV}V'(\rho(\varphi))=-\frac{1}{\pi}\frac{f'(\varphi)}{\rho'(\varphi)}.
\end{equation}
Observing that we have for the derivatives
\[f'(\varphi)=\frac{r^2 (\sin\varphi)^2+(r+1)(\sin r\varphi)^2-r\sin r\varphi \sin(r+2)\varphi}{(\sin r\varphi)^2},\]
\[\rho'(\varphi)=-\frac{r^2(\sin\varphi)^2 -2r\sin\varphi \sin r\varphi \cos(r+1)\varphi+(\sin r\varphi)^2}{(\sin\varphi)^2 (\sin r\varphi)^{r+1} (\sin(r+1)\varphi)^{-r}}\]
and using the fact that the numerators of both fractions coincide it can be verified that we have for \(0<\varphi<\frac{\pi}{r+1}\)
\begin{equation}\label{EQ}-\frac{1}{\pi}\frac{f'(\varphi)}{\rho'(\varphi)}=\frac{1}{\pi\rho(\varphi)}\frac{\sin(r+1)\varphi \sin\varphi}{\sin r\varphi},
\end{equation}
from which it follows that the density of \(V\) and that one of the Fuss-Catalan distribution coincide.
\end{proof}

\begin{remark} In general, the density function \(v\) of the Fuss-Catalan distribution of the order \(r\) can be described in terms of Meijer G-Functions (see \cite{Penson}) or in terms of multivariate integral representations (see \cite{Liu}). In the special cases \(r=1\) and \(r=2\) it is possible to derive more explicit representations. For instance, if \(r=1\), then the obtained asymptotic zero distribution can be described by the density
\[\frac{\sqrt{4-x}}{2\pi \sqrt{x}}\]
which is also known as the Marchenko-Pastur distribution, and it is well known to be the asymptotic zero distribution of the rescaled Laguerre polynomials (see, e.g., \cite{Gawronski}). 

However, the proof of Theorem \ref{WA} and especially the identity (\ref{DEN}) shows that after changing the coordinates from \(x\) to \(\varphi\) we obtain an elementary and explicit description for the density of the Fuss-Catalan distribution of order \(r\). We summarize this in the following theorem.
\end{remark}
\begin{theorem}[Fuss-Catalan distribution]\label{C} Let \(v(x)\) denote the continuous density of the Fuss-Catalan distribution of order \(r\) defined on \(\left(0, \frac{(r+1)^{r+1}}{r^r}\right)\). If
 \[x=\rho(\varphi)=\frac{\left(\sin{(r+1)\varphi}\right)^{r+1}}{\sin{\varphi}\left(\sin{r\varphi}\right)^r},\quad 0<\varphi<\frac{\pi}{r+1},\]
then we have
\[v(\rho(\varphi))=\frac{(\sin\varphi)^2 (\sin r\varphi)^{r-1}}{\pi (\sin(r+1)\varphi)^{r}}.\]
In the same sense we also obtain an explicit and elementary expression for the corresponding distribution function
\[V(\rho(\varphi))=1-\frac{(r+1)\varphi}{\pi}+r\frac{\sin{(r+1)\varphi}}{\pi\sin{r\varphi}}\sin{\varphi},\quad 0<\varphi<\frac{\pi}{r+1}.\] 
\end{theorem}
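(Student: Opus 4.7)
The proof is essentially bookkeeping that collects results already established in the proof of Theorem \ref{WA}. The plan is to observe that (i) the density formula is literally equation (\ref{DEN}) derived there via Stieltjes-Perron inversion, and (ii) the distribution function formula is an immediate consequence of the definition (\ref{V}) of $V$ together with the explicit formula (\ref{f}) for $f$.

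For the density claim, I would simply recall from the proof of Theorem \ref{WA} that the Stieltjes transform $F(z)$ of the Fuss-Catalan distribution, via the power-series identity (\ref{F}) and Pólya-Szeg\H{o} (\cite{Polya1}, Part 3, Chapter 5, Ex.~211), satisfies $w_1(z) = zF(z)$ with $w_1$ the unique solution of (\ref{AE}) tending to $1$ at infinity. Using the parameterization $x=\rho(\varphi)$ on the cut, the boundary values of $w_1$ are the two complex conjugate roots (\ref{Roots}). The Stieltjes-Perron inversion then gives
\[
v(\rho(\varphi)) = \frac{1}{2\pi i}\left(\frac{w_1(\rho(\varphi)-i0)}{\rho(\varphi)-i0} - \frac{w_1(\rho(\varphi)+i0)}{\rho(\varphi)+i0}\right) = \frac{1}{\pi\rho(\varphi)}\cdot\frac{\sin(r+1)\varphi\,\sin\varphi}{\sin r\varphi},
\]
which after inserting the explicit form of $\rho(\varphi)$ collapses to $(\sin\varphi)^2(\sin r\varphi)^{r-1}/\bigl(\pi(\sin(r+1)\varphi)^r\bigr)$, as claimed.

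For the distribution function, the definition (\ref{V}) and the bijectivity of $\rho$ yield directly
\[
V(\rho(\varphi)) = 1 - \frac{1}{\pi}\bigl(f\circ\rho^{-1}\bigr)(\rho(\varphi)) = 1 - \frac{f(\varphi)}{\pi},
\]
and substituting the explicit expression for $f(\varphi)$ from (\ref{f}) produces exactly
\[
V(\rho(\varphi)) = 1 - \frac{(r+1)\varphi}{\pi} + \frac{r\sin(r+1)\varphi\,\sin\varphi}{\pi\sin r\varphi}.
\]
A brief consistency check that the density obtained above is indeed $\frac{d}{dx}V$ at $x=\rho(\varphi)$ has already been performed in the proof of Theorem \ref{WA} via the computation leading to (\ref{EQ}), where the numerators of $f'(\varphi)$ and $-\rho'(\varphi)$ were seen to coincide; this serves as an internal cross-check rather than a new step.

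There is no real obstacle here: every analytic input (the algebraic equation for $w_1$, the boundary-value parameterization of its roots, the identification of $f$ and $\rho$, and the coincidence $V' = v$) has already been used and verified in the proof of Theorem \ref{WA}. The statement of Theorem \ref{C} is therefore best presented as a corollary that packages the density and distribution-function expressions into a self-contained, coordinate-explicit description of the Fuss-Catalan law.
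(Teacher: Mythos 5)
Your proposal matches the paper's approach exactly: the paper presents Theorem~\ref{C} without a separate proof, as a summary of the identity~(\ref{DEN}) established in the proof of Theorem~\ref{WA} together with the definitions~(\ref{V}) and~(\ref{f}). Your bookkeeping, including the cross-check via~(\ref{EQ}), is precisely what the paper intends, so this is correct and adds nothing new but is a faithful reconstruction.
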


\begin{remark} Comparing the moments of the distribution \(V\) defined in (\ref{V}) with the moments of the Fuss-Catalan distribution, the Fuss-Catalan numbers, from Theorem \ref{WA} it follows
\[\int\limits_0^{\frac{(r+1)^{r+1}}{r^r}} x^n dV(x) =\frac{1}{\pi} \int\limits_0^{\frac{\pi}{r+1}} \rho(\varphi)^n f'(\varphi)d\varphi =\frac{1}{rn+1} \binom{rn+n}{n},\]
where the functions \(\rho\) and \(f\) are defined in (\ref{rho}) and (\ref{f}). Moreover, using the identity (\ref{EQ}) and integrating by parts we obtain the (remarkable) identity
\[\int\limits_{0}^{1} \frac{(\sin{\pi\varphi})^{(r+1)n}}{(\sin(\tfrac{\pi}{r+1}\varphi))^n (\sin(\tfrac{r\pi}{r+1}\varphi))^{rn}}d\varphi=\frac{r+1}{\pi}\int\limits_{0}^{\frac{\pi}{r+1}}\left(\frac{(\sin(r+1)\varphi)^{r+1}}{\sin\varphi (\sin r\varphi)^{r}}\right)^n d\varphi=\binom{(r+1)n}{n}.\]
\end{remark}

\section*{Acknowledgments} The author would like to express his deepest gratitude to Prof. Arno Kuijlaars as well as to Prof. Wolfgang Gawronski for providing most valuable advice.

\end{document}